\newtheorem{theorem}{Theorem}[section]
\theoremstyle{definition}
\newcommand{\remove}[1]{}
 \title[Finite and infinite primes]
{Finite and infinite primes in  models of PA}
\author{Daniele Mundici}
\address[D. Mundici]{Department of
Mathematics and Computer Science  ``Ulisse Dini'' \\
University of Florence\\
Viale Morgagni 67/A \\
I-50134 Florence \\
Italy}
\email{daniele.mundici@unifi.it }
\date{\today}
\begin{document}

\keywords{Peano arithmetic,  prime number, 
L\"owenheim-Skolem theorem, G\"odel compactness theorem.}

 \subjclass[2000]{Primary: 03H15, 11N13   
Secondary: 03C07, 11A07, 11M06, 11U10, 11B25.}

 \begin{abstract}
We characterize  models of Peano arithmetic ($\mathsf{PA}$) 
with infinitely many {\it infinite} primes $p$
such that $p+2$ has no  {\it finite}  prime divisor.   
 \end{abstract}

\maketitle

\maketitle

\section{Statement of the main result}
We refer to \cite{chakei} for background in model theory.
The {\it standard} model
of first-order Peano arithmetic  ($\mathsf{PA}$)
 is  $\langle\mathbb N, +,\cdot, S,0\rangle$
where  $\mathbb N=\{0,1,2,\dots\}$, 
 $S$ is the successor function and $+,\cdot,0$
have their usual meaning,
\cite[1.4.11, p.42]{chakei}. Any other (nonisomorphic)
model is said to be {\it nonstandard}.

We assume that $\mathsf{PA}$ is consistent.
From G\"odel's second incompleteness theorem it
follows that the consistency of  $\mathsf{PA}$  can be
formalized as a sentence  $\phi$ in the language of  $\mathsf{PA}$,
but  neither $\phi$ nor $\neg \phi$ is a consequence of $\mathsf{PA}$.

A  {\it prime} in a model  $M$ of $\mathsf{PA}$
is an element $q \geq 2$ of the universe  $U$ of $M$  such that  
for all $x,y\in U$, if  $x\cdot y=q$  then either $x=q$ or
$y=q.$   
Any nonstandard model  $L$  of 
$\mathsf{PA}$ 
  has an initial segment $N_L$ isomorphic to the standard model.
  The primes of $L$ in $N_L$ are said to be {\it finite}.
  $L$ also has  an infinite set of {\it infinite} (also known as
{\it nonstandard}) primes.

Throughout, the symbol $p$
will be reserved for a finite prime, and the adjective ``finite''
will be  omitted when it is clear from the context. 
For  $k=1,2,\dots$ we let
\begin{equation}
\label{equation:lorena} 
p_k = \mbox{the $k$th prime in the list 2,3,5,7,11,\dots}\,.
\end{equation}

\medskip
We have not found in the literature a proof of the following result:
 
\begin{theorem}
\label{theorem:main} 
  There is a  countable  model $M$ of Peano arithmetic 
  containing an infinite sequence   $q_1<q_2<\cdots<q_i<\cdots$
  of infinite
  primes
such that  for each  $i=1,2,\dots,$\,\,  
no   finite  prime of $M$ is a divisor of $q_{i}+2$.
 \end{theorem}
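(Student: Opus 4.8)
The plan is to construct $M$ via the G\"odel compactness theorem together with the downward L\"owenheim--Skolem theorem, exhibiting the required primes as interpretations of new constants. First I would expand the language of $\mathsf{PA}$ by countably many fresh constant symbols $c_1,c_2,\dots$ to a language $L'$, and let $T$ be the $L'$-theory consisting of $\mathsf{PA}$ together with, for all $i,k\geq 1$ and all $n\in\mathbb{N}$, the sentences: (i) ``$c_i$ is prime'' in the sense of the definition given above; (ii) $c_i<c_{i+1}$; (iii) $c_1>S^n(0)$; and (iv) $\forall x\,\bigl(S^{p_k}(0)\cdot x\neq S(S(c_i))\bigr)$, expressing that the finite prime $p_k$ does not divide $c_i+2$. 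Any model $N\models T$, read in the language of $\mathsf{PA}$, then contains elements $q_i=c_i^N$ which by (ii)--(iii) form a strictly increasing sequence of infinite elements, by (i) are infinite primes, and by (iv) satisfy that no finite prime divides $q_i+2$; here one uses that the finite primes of $N$ are exactly the interpretations of the standard primes $p_k$, since a factorization in $N$ of a standard number uses only standard factors. Applying L\"owenheim--Skolem to a model of $T$ yields a countable $M$ of the desired kind.

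It remains to show that $T$ is consistent, and by compactness it suffices to satisfy each finite $T_0\subseteq T$. Such a $T_0$ mentions only finitely many constants $c_1,\dots,c_m$, only nondivisibility conditions involving primes among $p_1,\dots,p_r$, and only finitely many lower bounds $c_1>S^n(0)$ with $n\leq\nu$. I would satisfy $T_0$ inside the \emph{standard} model $\mathbb{N}$, interpreting the $c_j$ as suitable genuine natural numbers. Thus the whole matter reduces to the following number-theoretic claim, which is the heart of the proof: \emph{for every $r$ and every $\nu$ there exist actual primes $\nu<N_1<\cdots<N_m$ such that, for each $j$ and each $\ell\leq r$, the prime $p_\ell$ does not divide $N_j+2$.}

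I expect this number-theoretic step to be the main obstacle, and the only place where real content lies. Note that it is \emph{not} asking for $N_j+2$ to be prime, which would be a twin-prime statement and hopeless; one only needs $N_j+2$ to avoid the finitely many small factors $p_1,\dots,p_r$, and the factor $p_1=2$ is automatic since an odd prime $N$ makes $N+2$ odd. To produce the $N_j$ I would set $P=p_1\cdots p_r$ and, for each $\ell$, observe that the residues modulo $p_\ell$ that are both nonzero and distinct from $-2$ form a nonempty set: for $p_\ell=2$ the class of $1$ works, and for $p_\ell\geq 3$ at least one of the $p_\ell-1$ nonzero classes survives the deletion of $-2$. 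By the Chinese remainder theorem these combine into a single residue $a$ with $\gcd(a,P)=1$ and $a\not\equiv -2\pmod{p_\ell}$ for every $\ell$. Here the infinitude of primes alone does not suffice, since a pigeonhole argument cannot force primes into a \emph{good} class; so I would invoke Dirichlet's theorem on primes in arithmetic progressions. As $\gcd(a,P)=1$, the progression $a+P\mathbb{N}$ contains infinitely many primes, and selecting $m$ of them above $\nu$ yields $N_1<\cdots<N_m$ with each $N_j+2\equiv a+2\not\equiv 0\pmod{p_\ell}$, hence coprime to $P$. This gives a model of $T_0$ in $\mathbb{N}$, establishes finite satisfiability and thus the consistency of $T$, and completes the construction.
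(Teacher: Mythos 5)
Your proposal is correct and follows essentially the same route as the paper: expand the language with constants, axiomatize primality, ordering, and non-divisibility of $c_i+2$ by each standard prime, establish finite satisfiability in the standard model via the Chinese Remainder Theorem and Dirichlet's theorem on primes in arithmetic progressions, then apply compactness and L\"owenheim--Skolem. The only cosmetic differences are that you add explicit axioms $c_1>S^n(0)$ to force the constants to be nonstandard (the paper instead lets this follow from the non-divisibility conditions, since a finite prime $\bar c_i$ would give $\bar c_i+2$ a finite prime divisor), and that you allow an arbitrary admissible residue class $a$ modulo $p_1\cdots p_r$, where the paper fixes the specific class $-1 \pmod{p_1\cdots p_k}$.
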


\begin{proof}
Let  the  language $\mathcal L=\{+,\cdot, S, {0}\}$
of  $\mathsf{PA}$  be enriched  with 
new constant symbols  $c_1,c_2,\dots$\,.
Let  $\mathcal L^*=\mathcal L\cup \{c_1,c_2,\dots\}$
and $\mathcal L_n=\mathcal L \cup  \{c_1,c_2,\dots, c_n\}$.
By an  {\it $\mathcal L^*$-model of $\mathsf{PA}$}  
we mean a model $M$ of $\mathsf{PA}$ for the language $\mathcal L^*$,
where each constant symbol $c_i\in \mathcal L^*$ is interpreted
as an element $\bar c_i$ of the universe of $M$.
If the $\mathcal L$-reduct of $M$ is the standard
model of $\mathsf{PA}$ we say that $M$ is a
 {\it standard $\mathcal L^*$-model  of $\mathsf{PA}$.}
 Otherwise,  $M$ is a {\it nonstandard 
 $\mathcal L^*$-model  of $\mathsf{PA}$.}
   {\it Standard} and {\it nonstandard 
   $\mathcal L_n$-models  of $\mathsf{PA}$}  are similarly defined
   for the language $\mathcal L_n$.

To avoid confusion with the notation  $\equiv$ in modular
arithmetic,  we will let 
  = denote the identity relation symbol (denoted
$\equiv$ in \cite[1.3.7, p.25]{chakei}). 
While the relation symbol $\leq$
is not in $\mathcal L,$  we
 will  use the abbreviation   $x\leq y$ for $\exists z \,\, x+z=y$.
The derived relations $<,>,\geq$ are also freely used.

\medskip
\noindent
 Let $\alpha_1$ be the sentence
\begin{equation}
\label{equation:ascending-1}
c_1 \geq SS 0, 
\end{equation}
and for $i=2,3,4,\dots,$ let $\alpha_i$ be the sentence
\begin{equation}
\label{equation:ascending-n}
 c_{i}>c_{i-1}\ .
\end{equation}
Next  for   every $i=1,2,\dots$, let the sentence  $\beta_i$
be defined by
\begin{equation}
\label{equation:prime}
  \forall x\,\forall y \,\,\,\, x\cdot y=c_i\,\,\,
\to \,\,\,(x=c_i\,\, \vee\,\, y=c_i). 
\end{equation}
Finally,  for   every $i=1,2,\dots$
 and  prime $p=2,3,5,\dots$ 
let  $\gamma_{i,p}$ be the sentence
\begin{equation}
\label{equation:twin}
  \forall z\,\,\,\,c_i+SS 0\not=z\cdot
\underbrace{SS\cdots S}_{\mbox{\it \footnotesize p\,
\rm times}} 0.
\end{equation}
With  $\alpha_i, \beta_i, \gamma_{i,p}$  shorthand for the
sentences in \eqref{equation:ascending-1}-\eqref{equation:twin}, let   
\begin{equation}
\label{equation:omega-infinity}
 \Theta^*=\{\alpha_i, \beta_i, \gamma_{i,p}
 \mid i=1,2,\dots;  \,\,\, p=2,3,5,\dots \}.
\end{equation}
To prove the theorem we must construct  an
 $\mathcal L^*$-model of $\mathsf{PA}$
satisfying  $ \Theta^*$.

\smallskip
To this purpose, for   every $i=1,2,\dots$
 and prime $p=2,3,5,\dots$
 let   the sentence $\omega_{i,p}$ be  defined by 
\begin{equation}
\label{equation:omega}
\exists x \,\,\,\,\,\,c_i=x \cdot 
\underbrace{SS\cdots S}_{\mbox{\it \footnotesize p\,
\rm times}} 0 +
\underbrace{SS\cdots S}_{\mbox{\it \footnotesize p$-1$\,
\rm times}} 0.   
\end{equation}
Let  $N = \langle\mathbb N, +,\cdot, S,0, 
\bar c_1,\bar c_2,\dots \rangle$ be a standard
 $\mathcal L^*$-model of $\mathsf{PA}$
 with  $\bar c_i$   the interpretation  
of the constant symbol $c_i$.     
Then $N$ satisfies $ \omega_{i,p} $
if and only if  $\bar c_i$ is congruent to $p-1$ modulo $p$.
With the notation of \cite[1.3.14, p.28]{chakei} and \cite[p.29]{ireros}, 
\begin{equation}
\label{equation:modular}
N\models  \omega_{i,p} \,\,\,\,\,\, \mbox{if and only if}
\,\,\,\,\,\, \bar c_i\equiv p-1\,\,\,(p).
 \end{equation}
In particular,   from $N\models \omega_{i,p}$  it follows that   
 $\bar c_i+2\equiv 1\,\,\,(p)$,\, 
whence   $\bar c_i+2$ is not a multiple of $p$.
Recalling  \eqref{equation:twin} we may write 
$$
N\models \omega_{i,p} \to  \gamma_{i,p}.
$$
More generally, for each prime $p=2,3,5,\dots,$ the standard model  
$ \langle\mathbb N, +,\cdot, S,0\rangle$ satisfies the  sentence
$\sigma$ given by
 $$
\forall z[ (\exists x \,z=x \cdot 
\underbrace{SS\cdots S}_{\mbox{\it \footnotesize p\,
\rm times}} 0 +
\underbrace{SS\cdots S}_{\mbox{\it \footnotesize p$-1$\,
\rm times}} 0)\to
(\exists y \, z+SS 0=y \cdot 
\underbrace{SS\cdots S}_{\mbox{\it \footnotesize p\,
\rm times}} 0 +
S 0)].
$$
The sentence $\sigma$ is derivable  from the axioms of
$\mathsf{PA}$ and the rules of first-order logic,
$\mathsf{PA}\vdash \sigma.$
  \footnote{Most elementary number theory
  can be carried over in $\mathsf{PA}.$ 
Examples include the basic properties of
addition and multiplication and the
 uniqueness of remainder and quotient, 
from which $\mathsf{PA}\vdash \sigma$
routinely follows.
 For details  see, e.g.,  Theorem 1.3  in
  Van Oosten's lectures   ``Introduction to  Peano Arithmetic, G\"odel Incompleteness
  and Nonstandard Models", available online at
  https://webspace.science.uu.nl/$\sim$ooste110/syllabi/peanomoeder.pdf
    }\,\,\,
  Instantiation  $z\mapsto c_i$ now yields a proof in  
  $\mathsf{PA}$  of the
sentence  $ \omega_{i,p}\to  \gamma_{i,p}$.
In symbols, 
%
$\mathsf{PA}\vdash  \omega_{i,p}\to  \gamma_{i,p}$
 for  every $i=1,2,\dots$
 and prime $p=2,3,5,\dots$. 
  %
  %
Therefore, for every
 (standard or nonstandard) 
$\mathcal L^*$-model $M$ of
 $\mathsf{PA}$,
 \begin{equation}
\label{equation:to3}
M\models  \omega_{i,p}\to  \gamma_{i,p}\,\,\,\,
\,\,\,\mbox{for  every $i=1,2,\dots$
 and prime $p=2,3,5,\dots$.} 
\end{equation}

 \bigskip

\noindent
 {\rm  Claim:}
{\it Let  $n$ and $k$ be fixed but otherwise arbitrary
natural numbers $\geq 1$. 
  Let  $p_k$ the $k$th prime in the list    \eqref{equation:lorena}.
Then there exists a
standard $\mathcal L_n$-model 
   $N_{n,k}=\langle \mathbb N, +,\cdot,S, 0, \bar c_1,\dots,\bar c_n\rangle$
   of $\mathsf{PA}$ 
 such that
 \begin{equation}
 \label{equation:key}
 N_{n,k}\models
  \bigwedge_{i=1}^{n}    \alpha_{i}
  \wedge 
   \beta_i 
  \wedge   
\omega_{i,2}\wedge \omega_{i,3}\wedge \omega_{i,5}\wedge 
\dots\wedge  \omega_{i,p_k}  .\,
 \end{equation}
  In other words,   $\bar c_1<\dots<\bar c_n$ and for each
$i=1,\dots,n,\,\,\,$ $\bar c_i$ is a prime number 
such that 
$\bar c_i$ is congruent to $p-1$ modulo $p$ for any prime
 $p\leq p_k.$}

\bigskip
To prove this,  
for each $i=1,\dots,n$ let  the sentences
$\omega_{i,2},\omega_{i,3},\dots, \omega_{i,p_k}$
in \eqref{equation:omega}
be displayed as follows:
$$
\exists x   \,c_i=x\cdot SS 0+S 0,\,\,\,
\exists x   \,c_i =  x\cdot SSS 0+SS 0,\dots,
\exists x \,c_i=  x\cdot 
\underbrace{SS\cdots S}_{\mbox{\tiny $p_k$\,
\rm times}}\, 0 \,+
\underbrace{SS\cdots S}_{\mbox{\tiny $p_k-1$\,
\rm times}} 0.
$$
Recalling \eqref{equation:modular}, a
 standard $\mathcal L_n$-model  satisfies  the conjunction
$$
\bigwedge_{i=1}^n  \omega_{i,2}\wedge \omega_{i,3}
\wedge \omega_{i,5}\wedge \dots\wedge 
 \omega_{i,p_k}
$$
if and only if 
$\bar c_1,\dots,\bar c_n$ are solutions of the following
  system $\mathcal S$ of $n\times k$ modular equations,
  respectively
  in the unknowns $y_1,\dots, y_n\in \mathbb N$:
 $$
 \begin{array}{llll}
y_1\equiv  1   \,\,\,	\quad\quad  (2)
&\quad \quad y_2 \equiv  1   \,\,\,	\quad\quad  (2)
& \quad  \cdots
& \quad\quad y_n \equiv  1   \,\,\,	\quad\quad  (2)\\
y_1  \equiv  2   \,\,\,   \quad \quad(3)
&\quad\quad y_2\equiv  2   \,\,\,	  \quad\quad(3)
& \quad \cdots
& \quad\quad  y_n \equiv  2   \,\,\,	\quad\quad  (3)\\
y_1  \equiv  4    \,\,\,  \quad\quad(5)
&\quad\quad y_2\equiv  4   \,\,\,	 \quad\quad (5)
& \quad \cdots
& \quad\quad  y_n \equiv  4   \,\,\,	 \quad\quad (5)\\
\quad  \cdots   
&\quad \quad \quad  \cdots
&\quad    \cdots 
&\quad \quad \quad   \cdots\\  
y_1  \equiv  p_k-1  \,\,\, (p_k)
&\quad\quad y_2 \equiv  p_k-1    \,\,\,	 (p_k)
& \quad  \cdots
& \quad\quad  y_n \equiv  p_k-1    \,\,\,	 (p_k).
\end{array} 
$$

\noindent
The Chinese Remainder Theorem,
\cite[Theorem 1, p.34]{ireros},    provides
  infinitely many  solutions of
$\mathcal S$.
By direct inspection of the present special case, 
   the sequence
 $$
p_1p_2 \cdots p_k-1,\,\,\,\,\,\,\quad
2p_1p_2 \cdots p_k-1,\,\,\,\,\,\,\quad
3p_1p_2 \cdots p_k-1,\,\,\,\,\,\,\quad
4p_1p_2 \cdots p_k-1, \dots 
$$
yields all  solutions of $\mathcal S$.
 Dirichlet's theorem on arithmetic 
progressions \cite[\S 16, Theorem 1, p.251]{ireros}
ensures that this sequence contains infinitely many
primes.  {\it Thus we can   extract from it 
 an $n$-tuple  
 $\bar c_1<\dots<\bar c_n$ of   primes 
 solving the system $\mathcal S.$}
 We then obtain a standard $\mathcal L_n$-model 
   $N_{n,k}=\langle \mathbb N, +,\cdot,S, 0,
    \bar c_1,\dots,\bar c_n\rangle$
   of $\mathsf{PA}$ satisfying \eqref{equation:key}, as
   required to settle our claim.

\medskip
From    \eqref{equation:to3} and
\eqref{equation:key}, for all  $n,k = 1,2,\dots$    we obtain:
\begin{equation}
\label{equation:claim}
N_{n,k}
\models
   \bigwedge_{i=1}^{n} 
     \alpha_{i}
  \wedge 
   \beta_i 
  \wedge 
\gamma_{i,2}\wedge \gamma_{i,3}\wedge \gamma_{i,5}\wedge 
\dots\wedge  \gamma_{i,p_k}\,.
 \end{equation}
In other words,   $\bar c_1<\dots<\bar c_n$ and for each
$i=1,\dots,n,\,\,\,$ $\bar c_i$ is a prime number of $N_{n,k}$ 
such that $\bar c_i+2$ is not divisible by any prime $\leq p_k.$

\medskip
 
For every  $n,k=1,2,3,\dots$
 let us define the following subset of
the set 
$ \Theta^*$ of $\mathcal L^*$-sentences introduced  in 
 \eqref{equation:omega-infinity}:
 $$
\Theta_{n,k} =
  \{\alpha_i, \beta_i, \gamma_{i,p}
 \mid i=1,2,\dots,n;  \,\,\, p=2,3,5,\dots p_k \}.$$

\medskip
\noindent
Any finite subset  $\Theta$ of   $\Theta^*$
is contained in  $\Theta_{n,k}$ 
for all suitably large  $n$ and $k$.
From  \eqref{equation:claim} it follows that    
the standard  $\mathcal L_n$-model 
   $N_{n,k}$ 
satisfies  $\Theta.$
Therefore, for
 any finite subset  $\Pi$  of the set 
$\Pi^*$  of axioms of $\mathsf{PA},$\,\,\,
$N_{k,p}$  satisfies $\Pi\cup \Theta$.

Having thus shown that $ \Theta^*\cup\,\, \Pi^*$ is
finitely satisfiable, by G\"odel's compactness theorem
\cite[Theorem 1.3.22, p.67]{chakei} there is an  
an $\mathcal L^*$-model $M$  of $\mathsf{PA}$\
satisfying   $\Theta^*$.

By the L\"owenheim-Skolem  theorem
\cite[Corollaries 2.1.6-2.1.7, pp.67-68]{chakei},  
$M$ may be assumed  
countable.
For each $i=1,2,\dots,$ upon setting
$q_i=\bar c_i$,
the proof is complete.  
\end{proof}


\begin{thebibliography}{2}

\bibitem{chakei}
C.C.Chang, H.J.Keisler, Model Theory, 
Studies in Logic and the Foundations of Mathematics, Vol. 73, 
Third impression.  Elsevier, Amsterdam, 1992. 

\bibitem{ireros}
K.Ireland, M.Rosen, A Classical Introduction
to Modern Number Theory, Second edition, fifth corrected
printing.
Graduate Texts in
Mathematics, Vol. 84, Springer-Verlag, New York,  Heidelberg,  Berlin,
1990.

\end{thebibliography}
\end{document}